 % ------------------------------------------------------------------------
% bjourdoc.tex for birkjour.cls*******************************************
% ------------------------------------------------------------------------
%%%%%%%%%%%%%%%%%%%%%%%%%%%%%%%%%%%%%%%%%%%%%%%
\documentclass[12pt,a4paper]{article}
 \usepackage{graphicx}
\usepackage{amsfonts,amssymb,amsthm,amsmath}
\usepackage{newlfont}
\usepackage{enumitem}
\usepackage{color}
\usepackage{graphicx,color}
\usepackage{amsmath, amssymb, graphics}

%\usepackage[dvips]{color}
%\definecolor{red}{rgb}{1,0,0}\def\R{\mathbb R}

\def\r{\mathbb R}

%%%%%%%%%%%%%%%%%%%%%%%%%%%%%%%%%%%%%
%\setlength{\textwidth}{16cm}
\setlength{\oddsidemargin}{1cm}
\setlength{\evensidemargin}{1cm}
\setlength{\parskip}{3mm}
\setlength{\parindent}{0em}
\setlength{\headsep}{1.5cm}

\newtheorem{theorem}{Theorem}[section]

\newtheorem{remark}[theorem]{Remark}
\newtheorem{proposition}[theorem]{Proposition}

\title {The Dirichlet problem for the $\alpha$-singular minimal surface  equation}

\author{Rafael L\'opez\footnote{Partially supported by the grant no. MTM2017-89677-P, MINECO/AEI/FEDER, UE.}\\
Departamento de Geometr\'{\i}a y Topolog\'{\i}a\\
 Instituto de Matem\'aticas (IEMath-GR)\\
 Universidad de Granada\\
 18071 Granada, Spain\\
\texttt{rcamino@ugr.es}
}

\date{}

\begin{document}
\maketitle
\begin{abstract}
Let $\Omega\subset\r^n$ be a bounded  mean convex domain. If $\alpha<0$, we prove the existence and uniqueness of classical solutions of the Dirichlet problem in $\Omega$ for the $\alpha$-singular minimal surface equation with arbitrary continuous boundary data. 
\end{abstract}
 
{\it AMS Subject Classification:} 35J60, 53A10, 53C42

\noindent {\it Keywords:} Dirichlet problem, singular minimal surface, continuity, apriori estimates

%%%%%%%%%%%%%%%%%%%%%%5
\section{Introduction and statement of results}
%%%%%%%%%%%%%%%%%%%%%%%%%%%%
Let $\Omega\subset\r^n$ be a smooth   domain and $\alpha$ a given constant. We consider the   existence of classical  solutions $u\in C^2(\Omega)\cap C^0(\overline{\Omega})$, $u>0$ in $\overline{\Omega}$,  of the Dirichlet problem
\begin{eqnarray}
&&\mbox{div}\left(\dfrac{Du}{\sqrt{1+|Du|^2}}\right)= \frac{\alpha}{u\sqrt{1+|Du|^2}}\quad \mbox{in $\Omega$}\label{eq1}\\
&&u=\varphi\quad \mbox{on $\partial\Omega,$}\label{eq2}
\end{eqnarray}
where $D$ and div are the gradient and divergence operators and $\varphi>0$ is a positive continuous function in $\partial\Omega$. 
We call Equation   (\ref{eq1}) the {\it $\alpha$-singular minimal surface equation} and the graph $\Sigma_u=\{(x,u(x)):x\in\Omega\}$ is an {\it $\alpha$-singular minimal hypersurface}, or simply, a singular minimal surface.   Equation (\ref{eq1})  is an equation of mean curvature type because the mean curvature $H$ of $\Sigma_u$ is $H=\alpha/(nu\sqrt{1+|Du|^2})$. In the limit case  $\alpha=0$,  Equation (\ref{eq1}) is the known minimal surface equation. The theory of singular minimal surfaces has been intensively studied from the works of Bemelmans, Dierkes and Huisken among others: see \cite{bd,bht,di,di1,di2,dh,ke,ni}. An interesting case is   $\alpha=1$ because  the hypersurface $\Sigma_u$ has the property to have the lowest  center of gravity and this generalizes  to the $n$-dimensional case, the same property that  the catenary curve (\cite{bht,dh}). Other case of interest is $\alpha=-n$, where now $\Sigma_u$ is a minimal hypersurface in the upper halfspace model of hyperbolic space.

Usually, the existence of examples of singular minimal surfaces have been  considered from the parametric viewpoint  by solving the Plateau problem. However, the existence of singular minimal graphs has been only studied in \cite{bht} (see also \cite{di6}). Indeed, it was proved the existence of a solution of (\ref{eq1})-(\ref{eq2}) for $\alpha>0$ in bounded mean convex domains of $\r^n$ provided  the size of $\Omega$ is small in relation to the boundary data $\varphi$. Recall that $\Omega$ is said to be mean convex if the mean curvature $H_{\partial\Omega}$ of $\partial\Omega$ with respect to the inner normal is nonnegative at every point. Thus the result in \cite{bht} is an approach to the known   result of Jenkins and Serrin  in \cite{js}  that asserts the existence of a minimal graph for arbitrary continuous boundary data $\varphi$ if and only if $\Omega$ is a  bounded mean convex domain. 

The geometric properties of the singular minimal surfaces change drastically depending on the sign of $\alpha$. In this paper, and when $\alpha$ is negative, we are able  to extend the Jenkins-Serrin result without assumptions on the size of $\Omega$. The existence result is established by our next theorem. 

\begin{proposition} \label{t1}
Let $\Omega\subset\r^n$ be a bounded  mean convex domain with $C^{2,\gamma}$ boundary $\partial\Omega$ for some $\gamma\in (0,1)$. Assume $\alpha<0$. If $\varphi\in C^{2,\gamma}(\partial\Omega)$ is a positive function, then there exists a unique positive solution $u\in C^{2,\gamma}(\overline{\Omega})$ of (\ref{eq1})-(\ref{eq2}).
\end{proposition}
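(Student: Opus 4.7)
The plan is to apply the method of continuity. For $t\in [0,1]$ consider the family of Dirichlet problems
\begin{equation*}
\mathrm{div}\!\left(\frac{Du}{\sqrt{1+|Du|^2}}\right) = \frac{t\alpha}{u\sqrt{1+|Du|^2}}\ \ \text{in }\Omega,\qquad u=\varphi\ \ \text{on }\partial\Omega,
\end{equation*}
and let $I\subset[0,1]$ be the set of $t$ for which a positive classical solution $u_t\in C^{2,\gamma}(\overline{\Omega})$ exists. At $t=0$ this is the minimal surface Dirichlet problem, solvable by the Jenkins--Serrin theorem under the mean convexity hypothesis, with $u>0$ by the maximum principle since $\varphi>0$; hence $0\in I$. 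Openness of $I$ follows from the implicit function theorem in Banach spaces: the linearization of the operator $F$ at a solution $u_t$ is a linear elliptic operator whose zeroth-order coefficient $t\alpha/(u_t^2\sqrt{1+|Du_t|^2})$ is non-positive for $\alpha<0$, hence an isomorphism from $C^{2,\gamma}(\overline{\Omega})\cap\{v|_{\partial\Omega}=0\}$ to $C^{0,\gamma}(\overline{\Omega})$ by Schauder theory. The heart of the argument is closedness, which reduces to uniform a priori $C^{2,\gamma}(\overline{\Omega})$ estimates on $u_t$ independent of $t$.

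The $C^0$ lower bound $u_t\geq \min_{\partial\Omega}\varphi>0$ is immediate: at an interior minimum $Du_t=0$ and $\Delta u_t\geq 0$, while the equation forces $\Delta u_t=t\alpha/u_t\leq 0$ with strict inequality for $t>0$, a contradiction. The $C^0$ upper bound is more delicate, because the naive comparison with a constant $M$ fails: since $F(M)=-t\alpha/M>0$ for $\alpha<0$, the constant $M$ behaves as a subsolution rather than a supersolution, and indeed radial examples show that solutions typically exceed their boundary values in the interior (one finds $u''(0)=t\alpha/(nu(0))<0$, giving a strict interior maximum). The upper bound is obtained instead by comparing $u_t$ with a rotationally symmetric solution of the full equation. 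For $v_0$ large enough, the ODE counterpart with $v(0)=v_0$, $v'(0)=0$ admits a solution on a ball of radius exceeding $\mathrm{diam}(\Omega)$, because $v''(0)=t\alpha/(nv_0)\to 0$ as $v_0\to\infty$, so the radial profile remains close to $v_0$ over arbitrarily large distances; centering it at any point one obtains a radial solution with $v\geq \max_{\partial\Omega}\varphi$ on $\partial\Omega$. The comparison principle for proper quasilinear operators (available because $\partial_u F\leq 0$) then yields $u_t\leq v_0$. The same comparison principle delivers the uniqueness assertion at $t=1$.

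With uniform $C^0$ bounds keeping $u_t$ above and away from zero, the right-hand side of the equation is uniformly controlled, and standard barrier constructions using the signed distance function to $\partial\Omega$ together with the mean convexity hypothesis yield uniform boundary gradient estimates. A uniform interior gradient estimate then follows from the maximum-principle technique for mean curvature type equations (Korevaar, Trudinger), applied to an auxiliary function of the form $\eta\sqrt{1+|Du_t|^2}$ with a suitable cutoff $\eta$, since the right-hand side is already bounded in terms of $\|u_t\|_{C^0}$. The equation being thereby uniformly elliptic with controlled coefficients, De Giorgi--Nash--Moser gives H\"older continuity of $Du_t$ and Schauder estimates promote this to the required uniform $C^{2,\gamma}$ bound. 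The main technical obstacle I anticipate is the $C^0$ upper bound, since the absence of any constant supersolution forces a subtler barrier argument that must exploit the geometry of the equation together with the boundedness of $\Omega$; once this is settled, the remaining estimates proceed along well-trodden quasilinear lines.
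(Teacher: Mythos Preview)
Your proposal is correct and follows essentially the same continuity-method architecture as the paper: the same deformation in $t\alpha$, the same use of Jenkins--Serrin at $t=0$, the same openness via linearization with non-positive zeroth-order coefficient, and the same strategy of radial barriers for the $C^0$ upper bound together with distance-function barriers at the boundary for the gradient. The one notable divergence is in the interior gradient estimate. You invoke the Korevaar--Trudinger auxiliary-function technique for mean curvature type equations, which certainly applies; the paper instead observes that differentiating the equation $Q[u]=0$ with respect to $x_k$ shows each $v^k:=u_k$ satisfies a linear elliptic equation whose zeroth-order coefficient $\alpha(1+|Du|^2)/u^2$ is strictly negative when $\alpha<0$, so by the classical maximum principle $|Du|$ cannot have an interior maximum and $\max_{\overline{\Omega}}|Du|=\max_{\partial\Omega}|Du|$. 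This is both shorter and sharper, and it exploits the sign of $\alpha$ in a way your general-purpose argument does not. A minor organizational difference: the paper proves the $C^0$ and $C^1$ estimates once, at $t=1$, and then uses the monotonicity $u_{t_1}<u_{t_2}$ for $t_1<t_2$ (a comparison-principle consequence) to transfer them to all $u_t$; you instead aim for estimates uniform in $t$ directly, which also works.
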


 If the assumption of the mean convexity of $\Omega$ fails at some point, we do not show that Theorem \ref{t1} is not longer true, that is, there exists a boundary data $\varphi$ for which no solution exists. The corresponding result for minimal graphs in hyperbolic space and constant boundary data was proved by Lin in \cite[Th. 2.1]{li}. The proof of Theorem \ref{t1} involves the continuity method by deforming (\ref{eq1})-(\ref{eq2}) in a uniparametric family of Dirichlet problems varying the value of $\alpha$, and the classical techniques of   apriori estimates for elliptic equations: we refer   the reader to \cite{gt} as a general reference. Our proof can not extend  to the case $\alpha>0$  by the absence of apriori $C^0$ estimates since if $\alpha>0$ we have to prevent that $|u|\rightarrow 0$  for a solution $u$ in the continuity method.  

This paper is organized as follows. In Section \ref{sec2} we recall the maximum and comparison principles for Equation (\ref{eq1}) as well as the behavior of the radial solutions. In Sections \ref{sec3} and \ref{sec4},  we deduce the height and gradient estimates, respectively, and finally,   the   last section \ref{sec5} presents the proof of the Theorem \ref{t1} following the known continuity method.

%%%%%%%%%%%%%%%%%%%%%%%
\section{Preliminaries}\label{sec2}
%%%%%%%%%%%%%%%%%%%%%%%%%

As a consequence of the maximum principle for elliptic equations of divergence type, we have:

  \begin{proposition}[Touching principle]\label{pr21} Let $\Sigma_i$  be two $\alpha$-singular minimal surfaces, $i=1,2$.   If   $\Sigma_1$ and  $\Sigma_2$ have a common tangent interior point   and $\Sigma_1$ lies above $\Sigma_2$ around $p$, then $\Sigma_1$ and $\Sigma_2$ coincide at an open set around $p$. 
\end{proposition}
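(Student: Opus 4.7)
The plan is to reduce the statement to the classical Hopf strong maximum principle applied to the difference of two local graph representations. Since $p$ is a common interior point with $T_p\Sigma_1=T_p\Sigma_2$, I choose coordinates in $\r^{n+1}$ adapted to this tangent plane so that, in a neighborhood of $p$, both $\Sigma_i$ project diffeomorphically onto a common open set $U\subset T_p\Sigma_1\cong\r^n$; each $\Sigma_i$ is then the graph of a $C^2$ function $v_i:U\to\r$. At the projected point $p'$ we have $v_1(p')=v_2(p')$ and $Dv_1(p')=Dv_2(p')$, and the hypothesis that $\Sigma_1$ lies above $\Sigma_2$ near $p$ translates, after fixing the orientation of the common normal, into $v_1\ge v_2$ on $U$.

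The $\alpha$-singular minimal surface condition is the Euler--Lagrange equation of the weighted area functional $\int x_{n+1}^{\alpha}\,d\mathcal H^n$ and is therefore geometric, so in the rotated frame both $v_1$ and $v_2$ satisfy a common quasilinear equation $Q[v]=0$ whose coefficients are smooth functions of position, $v$ and $Dv$. Because $p$ is an interior point, $x_{n+1}(p)>0$, and the singular factor $1/x_{n+1}$ appearing in the original equation becomes a smooth bounded coefficient of $Q$ on a (possibly shrunk) $U$; combined with the $C^2$ bounds on $Dv_i$, this makes $Q$ uniformly elliptic on $U$ with bounded coefficients.

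Setting $w:=v_1-v_2\ge 0$ and using the standard interpolation trick
\[
0 = Q[v_1]-Q[v_2] = \int_0^1 \frac{d}{ds}Q[v_2+sw]\,ds =: Lw,
\]
I exhibit $w$ as a solution of a linear second-order uniformly elliptic equation $Lw=0$ on $U$ with bounded coefficients. Since $w\ge 0$ with $w(p')=0$, Hopf's strong maximum principle forces $w\equiv 0$ on the connected component of $U$ containing $p'$, so $\Sigma_1$ and $\Sigma_2$ coincide on an open neighborhood of $p$. The main obstacle is purely bookkeeping: writing $Q$ explicitly in the rotated frame and verifying that the coefficients of $L$ are bounded, which is routine once $p$ is separated from the degeneracy locus $\{x_{n+1}=0\}$; with this in hand the argument is the standard touching principle for quasilinear equations of mean curvature type, as in Gilbarg--Trudinger.
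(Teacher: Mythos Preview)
Your argument is correct and is precisely the standard unfolding of the touching principle for quasilinear elliptic equations; the paper itself gives no proof beyond the one-line remark that the proposition is ``a consequence of the maximum principle for elliptic equations of divergence type.''

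One simplification worth noting: in this paper, an $\alpha$-singular minimal surface is by definition a graph $\Sigma_u=\{(x,u(x))\}$ over a domain in the horizontal hyperplane, and ``$\Sigma_1$ lies above $\Sigma_2$'' is meant in the vertical sense. Hence you do not need to rotate to the common tangent plane or rewrite the equation in a tilted frame. You may work directly with the original operator $Q$ in (\ref{op}): both $u_1,u_2$ satisfy $Q[u_i]=0$, the tangency gives $u_1(x_0)=u_2(x_0)$, $Du_1(x_0)=Du_2(x_0)$, and $u_1\ge u_2$ near $x_0$. The same interpolation
\[
0=Q[u_1]-Q[u_2]=\int_0^1\frac{d}{ds}Q[u_2+sw]\,ds=:Lw,\qquad w=u_1-u_2\ge 0,
\]
yields a linear uniformly elliptic operator $L$ with bounded coefficients (here $u_i>0$ at $p$ keeps the $\alpha/u$ term smooth), and Hopf's strong maximum principle gives $w\equiv 0$ near $x_0$. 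Your more geometric route via the weighted area functional is not wrong---it would be needed if the surfaces were allowed to be non-graphical---but in the paper's setting the direct comparison of $u_1$ and $u_2$ spares you the bookkeeping you flagged as the main obstacle.
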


We also need to state the known comparison principle in the context of $\alpha$-singular minimal surfaces. Define the operator 
\begin{equation}\label{op}
\begin{split}
Q[u] &= (1+|Du|^2)\Delta u-u_iu_ju_{ij}-\frac{\alpha(1+|Du|^2)}{u}\\
& = a_{ij}(Du)u_{ij}+{\mathbf b}(u,Du),
\end{split}
\end{equation}
 where 
 $$a_{ij}=(1+|Du|^2)\delta_{ij}-u_iu_j,\quad {\mathbf b}= - \frac{\alpha(1+|Du|^2)}{u}.$$
 Here we are denoting $u_i=\partial u/\partial x_i$, $1\leq i\leq n$,  and we assume the summation convention of repeated indices.    It is immediate that $u$ is a solution of Equation (\ref{eq1}) if and only if $Q[u]=0$.  Further observe that the function $\mathbf{b}$ is non-increasing in $u$ for each $(x,Du)\in\Omega\times\r^n$ because $\alpha<0$. In particular, we   apply the classical comparison principle for elliptic equations (\cite[Th. 10.1]{gt}).

\begin{proposition}[Comparison principle] Let $\Omega\subset\r^n$ be a bounded domain. If $u,v\in C^2(\Omega)\cap C^0(\overline{\Omega})$ satisfy $Q[u]\geq Q[v]$ and $u\leq v$ on $\partial\Omega$, then $u\leq v$ in $\Omega$.
\end{proposition}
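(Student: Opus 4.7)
I would argue by contradiction. Suppose the set $\{x\in\Omega: u(x)>v(x)\}$ is nonempty. Then $w:=u-v$ lies in $C^2(\Omega)\cap C^0(\overline{\Omega})$, is $\le 0$ on $\partial\Omega$ by hypothesis, and is strictly positive somewhere in $\Omega$, so it attains its maximum $M>0$ at an interior point $x_0\in\Omega$.

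At $x_0$ the first- and second-order criteria give $Dw(x_0)=0$---hence $Du(x_0)=Dv(x_0)=:p_0$---and the Hessian $D^2w(x_0)$ is negative semidefinite. Because the coefficients $a_{ij}(\cdot)$ and $\mathbf{b}(u,\cdot)$ depend on the gradient only through the common value $p_0$, expanding $Q$ from \eqref{op} at $x_0$ yields
\[
Q[u](x_0)-Q[v](x_0)=a_{ij}(p_0)\bigl(u_{ij}-v_{ij}\bigr)(x_0)+\bigl(\mathbf{b}(u(x_0),p_0)-\mathbf{b}(v(x_0),p_0)\bigr).
\]
I would then use two structural facts about $Q$: first, $a_{ij}(p)=(1+|p|^2)\delta_{ij}-p_ip_j$ is positive definite (smallest eigenvalue $\ge 1$, since $a_{ij}(p)\xi_i\xi_j\ge(1+|p|^2)|\xi|^2-(p\cdot\xi)^2\ge|\xi|^2$), so its trace against the negative semidefinite matrix $D^2w(x_0)$ is $\le 0$; second, since $\alpha<0$, the map $u\mapsto\mathbf{b}(u,p)=-\alpha(1+|p|^2)/u$ is \emph{strictly} decreasing on $u>0$, so $u(x_0)>v(x_0)>0$ forces $\mathbf{b}(u(x_0),p_0)-\mathbf{b}(v(x_0),p_0)<0$. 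Summing, $Q[u](x_0)-Q[v](x_0)<0$, contradicting the standing hypothesis $Q[u]\ge Q[v]$.

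\textbf{Main point of care.} The argument only goes through because the second term is \emph{strictly} negative, and that is exactly what $\alpha<0$ buys: strict monotonicity of $\mathbf{b}$ in $u$. If one had merely non-strict monotonicity (as in the cases $\alpha=0$ or $\alpha>0$), the direct contradiction at the maximum would degenerate and one would have to linearize $Q[u]-Q[v]$ along the segment $v+t(u-v)$ and appeal to the maximum principle for linear elliptic operators with nonpositive zeroth-order coefficient, as in Gilbarg--Trudinger Theorem 10.1. Here the direct argument suffices. One tacit assumption worth flagging is the positivity of both $u$ and $v$ throughout $\overline{\Omega}$, which is built into the setup since $Q$ involves $1/u$; in particular $v(x_0)>0$, which is what makes $\mathbf{b}(v(x_0),p_0)$ meaningful and the strict monotonicity applicable.
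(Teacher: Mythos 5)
Your proof is correct, but it takes a genuinely different route from the paper, which gives no argument of its own for this proposition: it merely notes that $\mathbf{b}(u,p)=-\alpha(1+|p|^2)/u$ is non-increasing in $u$ when $\alpha<0$ and invokes the classical comparison principle of Gilbarg--Trudinger (Theorem 10.1), whose proof linearizes $Q[u]-Q[v]$ along the segment $v+t(u-v)$ and applies the maximum principle to the resulting linear elliptic operator with nonpositive zeroth-order coefficient. You instead argue directly at an interior maximum of $w=u-v$: there $Du=Dv=p_0$ and $D^2w\leq 0$, the trace of the positive definite matrix $a_{ij}(p_0)$ against $D^2w(x_0)$ is $\leq 0$, and the zeroth-order difference is \emph{strictly} negative because $u\mapsto -\alpha(1+|p|^2)/u$ is strictly decreasing on $u>0$ when $\alpha<0$; the sum is then strictly negative, contradicting $Q[u]\geq Q[v]$. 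All steps check out, including the eigenvalue bound $a_{ij}(p)\xi_i\xi_j\geq|\xi|^2$ and the observation that the positivity of $u$ and $v$ is built into the setup. What each approach buys: the citation route is more robust --- it survives merely non-strict monotonicity (e.g.\ the minimal surface case $\alpha=0$, which the paper also needs for its lower barrier $v^0$ in Proposition \ref{pr42}) --- whereas your direct argument is elementary and self-contained but genuinely relies on $\alpha<0$ to produce the strict inequality at the maximum point, exactly as you flag.
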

 
 We now prove the uniqueness of solutions of (\ref{eq1})-(\ref{eq2}) when $\alpha$ is negative.
 
\begin{proposition}\label{pr-u}
Let $\Omega\subset\r^n$ be a bounded domain and $\alpha<0$.  The solution of (\ref{eq1})-(\ref{eq2}), if exists, is unique.
  \end{proposition}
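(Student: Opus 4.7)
The proposition is a direct consequence of the comparison principle stated just above, so the plan is essentially to check that the hypotheses apply in both directions.

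Suppose $u, v \in C^2(\Omega) \cap C^0(\overline{\Omega})$ are both positive solutions of (\ref{eq1})-(\ref{eq2}). Then by definition $Q[u] = 0 = Q[v]$ in $\Omega$, which trivially gives both $Q[u] \geq Q[v]$ and $Q[v] \geq Q[u]$. On the boundary, $u = \varphi = v$, so in particular $u \leq v$ and $v \leq u$ on $\partial\Omega$.

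Invoking the Comparison Principle once with the roles $(u,v)$ yields $u \leq v$ in $\Omega$, and a second time with the roles swapped yields $v \leq u$ in $\Omega$. Combining the two inequalities gives $u = v$.

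The only conceptual point worth flagging is that the comparison principle as stated applies because the zeroth-order term $\mathbf{b}(u,Du) = -\alpha(1+|Du|^2)/u$ is non-increasing in $u$ when $\alpha < 0$, which is precisely the monotonicity hypothesis of Theorem 10.1 in \cite{gt}; this sign condition is the reason uniqueness holds for $\alpha < 0$ while the analogous argument fails for $\alpha > 0$. There is no real obstacle here: once the comparison principle is available, the uniqueness statement is immediate.
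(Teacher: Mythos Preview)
Your proof is correct and follows exactly the same route as the paper: the paper's one-line proof simply invokes \cite[Th.~10.1]{gt} via the monotonicity of the zeroth-order term, and you have just written out the standard two-sided application of the comparison principle in full. There is nothing to add.
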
   
  \begin{proof}
   The uniqueness is a consequence that the right hand side of (\ref{eq1}) is non-decreasing on $u$ (\cite[Th. 10.1]{gt}).
  \end{proof}
  
We point out that the above result fails if $\alpha>0$ by taking suitable examples in the class of rotational $\alpha$-singular minimal surfaces. 
  
We now show the behavior of the radial solutions of Equation (\ref{eq1}). Denote $u=u(r)$, $r=|x|$, and subsequently, $\Sigma_u$ is a rotational singular minimal hypersurface. The behavior of $u$ depends strongly on the sign of $\alpha$: see \cite{ke,lo}.  For our purposes, we only need the case   $\alpha<0$.

\begin{proposition}\label{pr-rot} Let $\alpha<0$ and let $u=u(r)$ be a radial solution of (\ref{eq1}). Then $u$ is a concave function whose maximal domain is a bounded ball $B_R=\{x\in\r^n: |x|<R\}$ with 
$$\lim_{r\rightarrow R} u(r)=0,\quad \lim_{r\rightarrow  R} u'(r)=-\infty.$$
\end{proposition}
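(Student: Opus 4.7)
My approach is to reduce Equation~(\ref{eq1}) to an ODE in $r$ and then analyze it by monotonicity arguments on successive derivatives of $u$. For $u=u(r)$, the equation reads, in divergence form,
\begin{equation*}
\left(\frac{r^{n-1}u'}{\sqrt{1+u'^{2}}}\right)'=\frac{\alpha r^{n-1}}{u\sqrt{1+u'^{2}}},
\end{equation*}
or equivalently
\begin{equation*}
\frac{u''}{1+u'^{2}}+\frac{(n-1)u'}{r}=\frac{\alpha}{u}.
\end{equation*}
Smoothness at $r=0$ forces $u'(0)=0$, and letting $r\to 0$ in the second form gives $u''(0)=\alpha/(nu(0))<0$. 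Integrating the divergence form from $0$ to $r$, together with $\alpha<0$ and $u>0$, yields $r^{n-1}u'/\sqrt{1+u'^{2}}<0$ for $r>0$, so $u$ is strictly decreasing on its maximal interval $[0,R)$.

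The main obstacle is concavity. The ODE rewrites
\begin{equation*}
u''=(1+u'^{2})\left(\frac{\alpha}{u}-\frac{(n-1)u'}{r}\right),
\end{equation*}
where $\alpha/u<0$ but $-(n-1)u'/r>0$, so the sign of $u''$ is not visible directly. I would argue by contradiction: if $u''$ were positive somewhere, set $r_{1}=\inf\{r>0:u''(r)>0\}$; by continuity $u''(r_{1})=0$, and since $u''$ attains positive values in every right neighborhood of $r_{1}$, necessarily $u'''(r_{1})\ge 0$. At $r_{1}$ the ODE reduces to $(n-1)u'(r_{1})/r_{1}=\alpha/u(r_{1})$. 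Differentiating the ODE once and substituting $u''(r_{1})=0$ yields
\begin{equation*}
\frac{u'''(r_{1})}{1+u'(r_{1})^{2}}=u'(r_{1})\left(\frac{n-1}{r_{1}^{2}}-\frac{\alpha}{u(r_{1})^{2}}\right),
\end{equation*}
which is the product of the negative factor $u'(r_{1})$ with a strictly positive one (both summands are positive since $\alpha<0$). Hence $u'''(r_{1})<0$, contradicting $u'''(r_{1})\ge 0$. Thus $u''\le 0$ throughout $[0,R)$.

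Concavity gives $R<\infty$ at once: for any fixed $r_{0}>0$ in the domain, $u(r)\le u(r_{0})+u'(r_{0})(r-r_{0})$ with $u'(r_{0})<0$ would push $u$ below zero in finite time, contradicting $u>0$ if $R=\infty$. To identify $u(R^{-})$, let $L=\lim_{r\to R^{-}}u(r)\ge 0$. If $L>0$ the ODE is regular at $(R,L)$, so continuation theory forces $|u'|$ to blow up at $R$; but then $u'\to-\infty$ together with $u\to L>0$ sends $\alpha/u-(n-1)u'/r\to+\infty$, so $u''\to+\infty$, contradicting concavity. Hence $L=0$.

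Finally, for $u'(r)\to-\infty$ as $r\to R$, I would use that $g(r)=r^{n-1}u'/\sqrt{1+u'^{2}}$ is uniformly bounded by $R^{n-1}$ in absolute value, together with
\begin{equation*}
g(R^{-})-g(0)=\int_{0}^{R}\frac{\alpha s^{n-1}}{u(s)\sqrt{1+u'(s)^{2}}}\,ds.
\end{equation*}
If $u'$ stayed bounded on $[0,R)$, then $u$ would be Lipschitz near $R$ with $u(r)\le C(R-r)$, so $1/u$ would be non-integrable at $R$ and the integral above would diverge to $-\infty$, contradicting the boundedness of $g$. Thus $u'$ must be unbounded, and since $u'$ is non-increasing by concavity, $u'(r)\to-\infty$.
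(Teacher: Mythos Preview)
The paper does not actually prove this proposition: it merely states it and refers the reader to \cite{ke,lo} for the behavior of radial solutions. Your argument, by contrast, is a self-contained ODE analysis and it is essentially correct.

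A few small remarks. In the concavity step your contradiction argument is clean; it is worth noting explicitly that $r_1>0$ (since $u''(0)<0$ and $u''$ is continuous), and that the solution is smooth enough on $(0,R)$ for $u'''$ to make sense, which follows from the smoothness of the right-hand side. In Step~5, your phrase ``continuation theory forces $|u'|$ to blow up'' should be read as a dichotomy: if $L>0$ and $u'$ stayed bounded, the ODE would be regular at $(R,L)$ and the solution would extend past $R$, contradicting maximality; hence $u'\to-\infty$, and then your computation $u''\to+\infty$ contradicts concavity. As written, that last contradiction uses $n\ge 2$ (for $n=1$ the term $-(n-1)u'/r$ vanishes and $u''\to-\infty$ instead, which is no contradiction); however, the case $n=1$ is handled directly by the first integral $1+u'^2=(u/u(0))^{2\alpha}$, which shows $u'$ remains bounded whenever $u\ge L>0$, forcing extendability and hence $L=0$. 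This is a minor edge case outside the paper's intended setting.

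In summary: where the paper simply cites the literature, you give a valid direct proof; the only point worth tightening is the $n=1$ branch of the $L>0$ argument.
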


 Recall that homotheties from the origin ${\mathbf O}\in\r^n$ preserve the Equation (\ref{eq1}), that is, if $u$ is a solution of (\ref{eq1}), then $\lambda u(x/\lambda)$, $\lambda>0$, also satisfies (\ref{eq1}).  As a consequence of Proposition \ref{pr-rot} and  using  homotheties, we establish the solvability of (\ref{eq1})-(\ref{eq2}) when $\Omega$ is any arbitrary ball and $\varphi$ is any positive constant. 
 
 \begin{proposition}\label{pr25} Let $\alpha<0$. Then for any $r, c>0$, there exists a unique radial  solution $u$ of  (\ref{eq1}) in $\Omega=B_r$ with  $u=c$ on $\partial B_r$.
 \end{proposition}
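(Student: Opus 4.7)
The plan is to manufacture the required solution from a single reference radial solution by exploiting the homothety invariance of Equation~(\ref{eq1}). Fix any $h>0$ and let $v=v(\rho)$ be the radial solution of (\ref{eq1}) satisfying $v(0)=h$ and $v'(0)=0$, where the vanishing initial slope is forced by regularity at the origin. Proposition~\ref{pr-rot} provides a maximal radius $R=R(h)>0$ such that $v$ is concave on $[0,R)$ with $v(\rho)\to 0$ and $v'(\rho)\to -\infty$ as $\rho\to R$. Concavity together with $v'(0)=0$ and the infinite negative slope at the endpoint forces $v'<0$ on $(0,R)$, so $v:(0,R)\to(0,h)$ is a strictly decreasing continuous bijection.

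Given arbitrary $r,c>0$, I look for $\lambda>0$ and $\rho\in(0,R)$ so that the rescaled function $v_\lambda(x):=\lambda\,v(|x|/\lambda)$, which solves (\ref{eq1}) radially on $B_{\lambda R}$ by the homothety invariance, satisfies $v_\lambda\equiv c$ on $\partial B_r$. The two conditions $\lambda\rho=r$ and $\lambda\,v(\rho)=c$ collapse to the single scalar equation $v(\rho)/\rho=c/r$. Define $F(\rho)=v(\rho)/\rho$ on $(0,R)$; since $v(\rho)\to h>0$ as $\rho\to 0^+$ and $v(\rho)\to 0$ as $\rho\to R^-$, one has $F(\rho)\to+\infty$ and $F(\rho)\to 0$ at the two endpoints. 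By continuity and the intermediate value theorem there is $\rho_0\in(0,R)$ with $F(\rho_0)=c/r$, and then $u:=v_{r/\rho_0}$ restricted to $\overline{B_r}$ is a radial solution of (\ref{eq1}) in $B_r$ with $u\equiv c$ on $\partial B_r$. Uniqueness follows immediately from Proposition~\ref{pr-u} applied on $\Omega=B_r$.

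The only point that genuinely requires care is the existence and smoothness at the origin of the reference solution $v$, since the radial reduction of~(\ref{eq1}) has a coordinate singularity at $\rho=0$ which forces $v'(0)=0$ and prevents a direct appeal to the standard ODE existence theorem. This is the classical issue already treated in the references \cite{ke,lo} on radial singular minimal surfaces, after which the remainder of the proof reduces to the transparent one-dimensional continuity argument for $F$ described above.
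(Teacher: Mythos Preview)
Your proof is correct and follows essentially the same strategy as the paper: start from a reference radial solution provided by Proposition~\ref{pr-rot}, exploit the homothety invariance of (\ref{eq1}), and use a continuity argument to select the scaling factor that matches the prescribed boundary value. The paper phrases the continuity step geometrically---shrinking $\lambda\Sigma_v$ until it first touches the disk $B_r\times\{c\}$---whereas you make it explicit by applying the intermediate value theorem to $F(\rho)=v(\rho)/\rho$; the two arguments are equivalent, and your formulation is in fact the cleaner one.
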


\begin{proof} Let $u=u(r)$ be any radial solution of (\ref{eq1}) defined in its maximal domain $B_R$. Take $\lambda>0$ sufficiently large so $u_\lambda(x)=\lambda u(x/\lambda)$ has the property that the bounded domain determined by its graph $\Sigma_{u_\lambda}$ and the plane $\r^n\times\{0\}$ contains the ball $B_r\times\{c\}$. Let $\lambda$ decrease until some value  $\lambda_0$ such that $\Sigma_{u_{\lambda_0}}$ intersect $B_r\times\{c\}$. Then the function $u_{\lambda_0}$ is the solution that we are looking for.
\end{proof}

  %%%%%%%%%%%%%%%%
 \section{Height estimates}\label{sec3}
 %%%%%%%%%%%%%%%%%%%%
 
 In this section we obtain $C^0$ apriori estimates for solutions of (\ref{eq1})-(\ref{eq2}) when $\alpha<0$.
 
  \begin{proposition}  \label{pr-31}
   Let $\Omega\subset\r^n$ be a bounded domain and   $\alpha<0$. If  $u$ is a positive solution of (\ref{eq1})-(\ref{eq2}), then there exists a constant $C_1=C_1(\alpha,\Omega,\varphi)>0$  such that 
\begin{equation}\label{eh}
 \min_{\partial\Omega}\varphi\leq u\leq C_1 \quad \mbox{in $\Omega$}.
\end{equation} 
\end{proposition}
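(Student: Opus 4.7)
The plan is to deduce both inequalities in (\ref{eh}) from the comparison principle of Section \ref{sec2} by testing against carefully chosen barriers: a constant function below, and a rotational singular minimal graph above.

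For the lower bound, I would set $m=\min_{\partial\Omega}\varphi$ and compare $u$ with the constant $v\equiv m$. Direct substitution in (\ref{op}) with $Dv\equiv 0$ and $v_{ij}\equiv 0$ gives $Q[v]=-\alpha/m>0$ since $\alpha<0$ and $m>0$. Because $Q[u]=0\le Q[v]$ in $\Omega$ and $v=m\le\varphi=u$ on $\partial\Omega$, the comparison principle immediately yields $u\ge m$ throughout $\Omega$, which is the left inequality in (\ref{eh}).

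For the upper bound the idea is to trap $\Sigma_u$ beneath the graph of a radial solution provided by Proposition \ref{pr25}. Fix any $x_0\in\r^n$ and $R>0$ with $\overline{\Omega}\subset\overline{B_R(x_0)}$, and put $M=\max_{\partial\Omega}\varphi$. Using the translation invariance of (\ref{eq1}) together with Proposition \ref{pr25}, there exists a radial solution $w$ of (\ref{eq1}) on $\overline{B_R(x_0)}$ with $w\equiv M$ on $\partial B_R(x_0)$. By Proposition \ref{pr-rot} the function $w$ is concave in $r=|x-x_0|$ and attains its maximum at $x_0$, so $w\ge M$ on $\overline{B_R(x_0)}$, and in particular $w\ge M\ge\varphi=u$ on $\partial\Omega$. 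Since $Q[u]=0=Q[w]$, a second application of the comparison principle delivers $u\le w\le w(x_0)$ in $\Omega$; the required constant is then $C_1=w(x_0)$, which depends only on $\alpha$, on the size of $\Omega$ (through $R$) and on $\varphi$ (through $M$).

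There is no substantial obstacle once Propositions \ref{pr-rot} and \ref{pr25} are granted: the proof consists of two comparisons against a constant and against a rotational singular minimal graph. The only point needing a moment of care is the use of translation invariance to center the rotational barrier at a convenient $x_0$, together with the observation that the concavity supplied by Proposition \ref{pr-rot} forces the radial solution to exceed its boundary value throughout the interior of the enclosing ball. Both are immediate from the preparatory material already collected in Section \ref{sec2}.
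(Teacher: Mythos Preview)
Your proof is correct and follows essentially the same strategy as the paper: the lower bound via the maximum/comparison principle against the constant $\min_{\partial\Omega}\varphi$, and the upper bound via a rotational barrier supplied by Proposition~\ref{pr25}. The only minor difference is in how the upper bound is verified: the paper uses a homothety sweep $\lambda\Sigma_v\to\Sigma_v$ together with the touching principle (Proposition~\ref{pr21}) to rule out interior contact, whereas you use the concavity from Proposition~\ref{pr-rot} to check $w\ge M\ge u$ on $\partial\Omega$ and then apply the comparison principle in one step---both are equally valid.
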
 
  
  \begin{proof} Since   the right hand side of (\ref{eq1}) is  negative, then $\inf_\Omega u=\min_{\partial\Omega}\varphi$ by the maximum principle. The upper estimate for $u$ is obtained by comparing $\Sigma_u$ with radial solutions of (\ref{eq1}). Exactly, let  $B_R\subset\r^n$ be a  ball centered at the origin ${\mathbf O}$ of radius $R>0$ sufficiently large such that $\overline{\Omega}\subset B_R$.      Set $\varphi_M=\max_{\partial\Omega}\varphi$. By Proposition \ref{pr25},  let  $v=v(r)$ be the radial solution of (\ref{eq1}) with $v=\varphi_M$ on $\partial B_R$. 
    
 Let $\lambda>1$ be sufficiently large that $\lambda\Sigma_v\cap\Sigma_u=\emptyset$. Notice that the hypersurface $\lambda\Sigma_v$ is a singular minimal hypersurface for the same constant $\alpha$ than $\Sigma_v$. Let    $\lambda$ decrease to $1$. By Proposition \ref{pr21}, it is not possible a contact at some interior point between $\lambda \Sigma_v$ and $\Sigma_u$  because  $\partial(\lambda\Sigma_v)\cap\partial\Sigma_u=\lambda(\partial\Sigma_v)\cap\partial\Sigma_u=\emptyset$ for all $\lambda>1$. Therefore we arrive until  the initial position $\lambda=1$ and we find $\Sigma_v\cap\Sigma_u=\emptyset$. Consequently, $u<v\leq \sup_\Omega v:=C_1$, and $C_1$ depends only on $\alpha$, $\Omega$ and $\varphi$. 
   \end{proof}
 
Of particular interest is when  $\varphi=c>0$ is a constant function on $\partial\Omega$. Then we may improve estimate (\ref{eh}) with the preceding argument by taking all singular minimal surfaces of rotational type. Among all them, we choose the rotational example $\Sigma_v$ with lowest height. This achieves when, after a horizontal translation if necessary, consider $B_R$   the circumscribed sphere of $\Omega$. In such a case, the inequality (\ref{eh}) is now $c<u\leq v\leq v(0)$ in $\Omega$.
 
  %%%%%%%%%%%%%%%%
 \section{Gradient estimates}\label{sec4}
 %%%%%%%%In this section we will prove Theorem \ref{t1}.

Firstly, we derive estimates for $\sup_\Omega|Du|$ in terms of $\sup_{\partial\Omega}|Du|$. In the next result, the fact that $\alpha$ is negative is essential.

\begin{proposition}[Interior gradient estimates] \label{pr-41} 
   Let $\Omega\subset\r^n$ be a bounded domain and $\alpha<0$. If  $u\in C^2(\Omega)\cap C^1(\overline{\Omega})$ is a positive solution of (\ref{eq1})-(\ref{eq2}), then the maximum of the gradient is attained at some boundary point, that is, 
  $$\max_{\overline{\Omega}}|Du|=\max_{\partial\Omega}|Du|.$$
\end{proposition}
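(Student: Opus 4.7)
The plan is to reduce the statement to a maximum principle for the auxiliary function $P=|Du|^2$. Concretely, I want to show that at any interior critical point of $P$ where $P>0$, the quantity $a_{ij}P_{ij}$ is strictly positive, which contradicts $a_{ij}P_{ij}\le 0$ coming from the second-order condition at a maximum together with positive definiteness of $(a_{ij})$. (Recall that $a_{ij}=(1+|Du|^2)\delta_{ij}-u_iu_j$ has eigenvalues $1+|Du|^2$ and $1$, hence is positive definite.)

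First I would set $P=|Du|^2$, assume for contradiction that $P$ attains its maximum over $\overline{\Omega}$ at an interior point $x_0\in\Omega$, and dispose of the trivial case $P(x_0)=0$. At $x_0$ the first-order condition gives $u_l u_{lk}=0$ for every $k$, and the second-order condition combined with $(a_{ij})>0$ gives $a_{ij}P_{ij}(x_0)\le 0$. Next I would differentiate the equation $a_{ij}u_{ij}=\alpha(1+|Du|^2)/u$ in $x_k$, multiply by $u_k$, and sum on $k$: using $\partial_k a_{ij}=2u_lu_{lk}\delta_{ij}-u_{ik}u_j-u_iu_{jk}$ and the vanishing relations from $DP(x_0)=0$, all cross terms on the left collapse, and on the right only the term $-\alpha(1+|Du|^2)|Du|^2/u^2$ survives. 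This yields the identity
\begin{equation*}
u_k a_{ij} u_{ijk}(x_0) \;=\; -\,\frac{\alpha(1+|Du|^2)|Du|^2}{u^2}.
\end{equation*}

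I would then expand $P_{ij}=2u_{ki}u_{kj}+2u_ku_{kij}$, contract with $a_{ij}$, and observe that at $x_0$ the identity $u_lu_{lk}=0$ forces $u_iu_ju_{ki}u_{kj}=\sum_k\bigl(\sum_i u_iu_{ki}\bigr)^2=0$, so $a_{ij}u_{ki}u_{kj}=(1+|Du|^2)|D^2u|^2$. Plugging in the identity above, I get
\begin{equation*}
a_{ij}P_{ij}(x_0) \;=\; 2(1+|Du|^2)\!\left[\,|D^2 u|^2 \;-\; \frac{\alpha\,|Du|^2}{u^2}\right].
\end{equation*}
Because $\alpha<0$ and $u>0$, the bracket is strictly positive as soon as $|Du(x_0)|>0$, contradicting $a_{ij}P_{ij}(x_0)\le 0$. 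Hence no such interior maximum exists and $\max_{\overline{\Omega}}|Du|=\max_{\partial\Omega}|Du|$.

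The only delicate point I anticipate is bookkeeping the many index contractions in the differentiated equation so as to exhibit the cancellations cleanly at $x_0$. The conceptual obstacle — and the reason the statement requires $\alpha<0$ — is entirely in the sign of $-\alpha|Du|^2/u^2$: this is the good term that lets $|D^2u|^2\ge 0$ alone not be needed, and it reverses sign when $\alpha>0$, which is why the interior gradient estimate in this strong form is specific to the negative regime.
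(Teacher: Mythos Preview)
Your proof is correct and rests on the same core idea as the paper --- differentiate the equation and exploit that $\alpha<0$ gives the right sign in the resulting zeroth-order term --- but the packaging is different. The paper writes down the linear elliptic equation satisfied by each $v^k=u_k$, observes that its zeroth-order coefficient $\alpha(1+|Du|^2)/u^2$ is negative, and then invokes the strong maximum principle (Gilbarg--Trudinger, Th.~3.7) to conclude that no directional derivative of $u$, hence $|Du|$, can have an interior maximum. You instead work directly with the auxiliary function $P=|Du|^2$ and derive the contradiction by hand at a putative interior maximum, obtaining the explicit identity $a_{ij}P_{ij}(x_0)=2(1+|Du|^2)\bigl[|D^2u|^2-\alpha|Du|^2/u^2\bigr]$. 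Your route is a bit more self-contained (no black-box theorem) and makes the role of the sign of $\alpha$ very transparent; the paper's route is shorter because the linear structure for $u_k$ allows one to quote a standard result immediately. Both arguments tacitly use interior smoothness of $u$ (third derivatives appear), which is justified by elliptic regularity for classical solutions.
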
 

\begin{proof}
 We know that (\ref{eq1}) can be expressed as  (\ref{op}).  Let $v^k=u_k$, $1\leq k\leq n$, and we differentiate (\ref{op}) with respect to $x_k$, obtaining for each $k$,
 \begin{equation}\label{eq3}
 \left((1+|Du|^2)\delta_{ij}-u_iu_j\right)v_{ij}^k+2\left(u_i\Delta u-u_ju_{ij}-\frac{\alpha u_i}{u}\right)v_i^k+\frac{\alpha(1+|Du|^2)}{u^2}v^k=0.
 \end{equation}
 Equation (\ref{eq3})  is a linear elliptic equation in the function $v^k$ and, in addition,  the coefficient for $v^k$ is negative because $\alpha<0$. By the maximum principle \cite[Th. 3.7]{gt},   $|v^k|$, and then $|Du|$, has not an interior maximum. In particular, if $u$ is a solution of (\ref{eq1}), the maximum of $|Du|$ on the compact set $\overline{\Omega}$ is attained at some boundary point,  proving the result.  
 \end{proof}

Once proved   Proposition \ref{pr-41},  the problem of  finding apriori estimates of $|Du|$ reduces to find them along $\partial\Omega$. Then we now address it by proving  that $u$  admits   barriers from above and from below along $\partial\Omega$. It is now when we use the mean convexity property of $\Omega$.

     \begin{proposition}[Boundary gradient estimates]  \label{pr42}
   Let $\Omega\subset\r^n$ be a  bounded mean convex domain and $\alpha<0$. If  $u\in C^2(\Omega)\cap C^1(\overline{\Omega})$ is a positive solution of (\ref{eq1})-(\ref{eq2}), then there exists a constant $C_2=C_2(\alpha,\Omega, C_1,\|\varphi\|_{2;\Omega})$ such that 
   $$\max_{\partial\Omega}|Du|\leq C_2.$$
    
\end{proposition}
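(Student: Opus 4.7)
The plan is to construct upper and lower barriers for $u$ along $\partial\Omega$ and apply the comparison principle. Since $u=\varphi$ on $\partial\Omega$, the tangential component of $Du|_{\partial\Omega}$ coincides with $D\varphi$ and is already bounded by $\|\varphi\|_{C^1(\partial\Omega)}$; only the inward normal derivative requires work.

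First, I would extend $\varphi$ to $\varphi^{\ast}\in C^{2,\gamma}(\overline{\Omega})$ with $\|\varphi^{\ast}\|_{C^2(\overline{\Omega})}\leq C(\Omega)\|\varphi\|_{C^{2}(\partial\Omega)}$, and set $d(x)=\mathrm{dist}(x,\partial\Omega)$, which is $C^{2,\gamma}$ on a one-sided tubular neighborhood $\Omega_\rho=\{x\in\Omega:d(x)<\rho\}$ for some $\rho=\rho(\Omega)>0$. The mean convexity of $\Omega$ is equivalent to $\Delta d\leq 0$ on $\partial\Omega$, hence $\Delta d(x)\leq C(\Omega)\,d(x)$ in $\Omega_\rho$.

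Next, I would seek an upper barrier $w^{+}=\varphi^{\ast}+h(d)$ of Serrin type, with $h(t)=\mu^{-1}\log(1+Kt)$ for parameters $\mu, K>0$. Since $h''=-\mu(h')^{2}$, working in an orthonormal frame whose last vector is $\nabla d$, one checks that the principal part of $a_{ij}(Dw^{+})w^{+}_{ij}$ as $h'\to\infty$ is
\begin{equation*}
a_{nn}h''(d)+(h'(d))^{3}\Delta d + O\bigl((h'(d))^{2}\bigr),
\end{equation*}
with $a_{nn}\geq 1$ and the remainder absorbing all derivatives of $\varphi^{\ast}$. The source term satisfies $\mathbf{b}(w^{+},Dw^{+})\leq C(\alpha,C_{1})(h'(d))^{2}$. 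The dominant contribution $a_{nn}h''\leq-\mu(h')^{2}$ is strongly negative; combined with the mean convexity bound on $\Delta d$ (a direct maximization bounds $d\,(h')^{3}$ on $[0,\rho]$ so that $(h')^{3}\Delta d$ is absorbed by $\mu(h')^{2}$ once $\mu$ is large), this yields $Q[w^{+}]\leq 0$ in $\Omega_\rho$ after choosing $\mu$ large depending on $(\alpha,\Omega,C_1,\|\varphi\|_{C^2})$ and then $K$ large enough that $h(\rho)\geq C_{1}$.

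Since $w^{+}=\varphi=u$ on $\partial\Omega$ and $w^{+}\geq C_{1}\geq u$ on the inner face $\{d=\rho\}\cap\overline{\Omega}$, the comparison principle gives $u\leq w^{+}$ in $\Omega_\rho$. Differentiating along the inward normal $\nu$ at $x_{0}\in\partial\Omega$ produces
\begin{equation*}
\partial_{\nu}u(x_{0})\leq \partial_{\nu}\varphi^{\ast}(x_{0})+h'(0)=\partial_{\nu}\varphi^{\ast}(x_{0})+\tfrac{K}{\mu}.
\end{equation*}
A symmetric construction with $w^{-}=\varphi^{\ast}-h(d)$ supplies the reverse inequality. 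Combined with the interior estimate of Proposition \ref{pr-41}, this proves $\max_{\partial\Omega}|Du|\leq C_{2}$ with $C_{2}$ depending only on the listed data.

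The main obstacle is the verification $Q[w^{+}]\leq 0$ for the chosen $h$: the good term $a_{nn}h''\sim-\mu(h')^{2}$ must dominate both the positive source $\mathbf{b}$ and the potentially positive cubic contribution $(h')^{3}\Delta d$. Mean convexity is essential precisely to prevent $\Delta d$ from producing an uncontrolled positive contribution cubic in $h'$, and the assumption $\alpha<0$ intervenes to ensure $\mathbf{b}$ is merely quadratic in $h'$ and can be absorbed by enlarging $\mu$.
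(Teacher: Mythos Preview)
Your upper-barrier construction is essentially the paper's: both use $w^{+}=\varphi+h(d)$ with $h(t)=a\log(1+bt)$ and rely on $h''=-(h')^{2}/a$ to beat the quadratic source term. Two points of comparison are worth noting.

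\emph{Use of mean convexity.} You deduce only $\Delta d\leq C(\Omega)\,d$ near $\partial\Omega$ and then have to absorb the cubic term $(h')^{3}\Delta d$ into $\mu(h')^{2}$. The paper exploits the sharper fact that $\Delta d\leq 0$ \emph{throughout} the tubular neighborhood, not just on $\partial\Omega$: writing $\Delta d(x)=-\sum_i \kappa_i/(1-\kappa_i d)$ one checks $\sum_i \kappa_i/(1-\kappa_i d)\geq \sum_i\kappa_i=(n-1)H_{\partial\Omega}(\pi(x))\geq 0$. Hence the whole term $h'\,a_{ij}d_{ij}$ is already $\leq 0$ and is simply dropped, so no cubic-versus-quadratic competition ever arises. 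Your absorption argument is correct, but unnecessary.

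\emph{Lower barrier.} Here the paper does something different and cleaner: it takes the Jenkins--Serrin minimal graph $v^{0}$ on $\Omega$ with the same boundary data $\varphi$. Since $v^{0}>0$ by the maximum principle and $Q[v^{0}]=-\alpha(1+|Dv^{0}|^{2})/v^{0}>0=Q[u]$, the comparison principle gives $v^{0}\leq u$ on all of $\Omega$, immediately bounding the inward normal derivative of $u$ from below by that of $v^{0}$. Your symmetric barrier $w^{-}=\varphi^{\ast}-h(d)$ can be made to work, but not quite ``by symmetry'': since $Q$ involves $1/u$, the expression $Q[w^{-}]$ is only defined where $w^{-}>0$, and with $h(\rho)$ chosen large (as you need for the inner-face inequality) the function $w^{-}$ will typically become negative in $\Omega_{\rho}$. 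The fix is to run the comparison only on the open set $\{w^{-}>u\}$, where automatically $w^{-}>u>0$; you should make this explicit.

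Finally, the appeal to Proposition~\ref{pr-41} in your last line is superfluous for the statement at hand, which concerns only $\max_{\partial\Omega}|Du|$; that proposition is used separately to pass from boundary to interior gradient bounds.
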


 \begin{proof} 
 We consider the operator $Q[u]$ defined (\ref{op}). A lower barrier for $u$  is obtained by considering the subsolution $v^0$ of the Dirichlet problem for the minimal surface equation in $\Omega$ with the same boundary data $\varphi$: the existence of $v^0$ is assured by the   Jenkins-Serrin  result (\cite{js}). Because $Q[v^0]>0=Q[u]$ and $v^0=u$ on $\partial\Omega$, we conclude  $v^0<u$ in $\Omega$ by the comparison principle.

  We now find an upper barrier for $u$. Here we use  the distance function in a small tubular neighborhood of $\partial\Omega$ in $\Omega$.   The following arguments are standard: see \cite[Ch. 14]{gt} for details. 
  Consider  the distance function $d(x)=\mbox{dist}(x,\partial\Omega)$ and let $\epsilon>0$ sufficiently small so  $\mathcal{N}_\epsilon=\{x\in\overline{\Omega}: d(x)<\epsilon\}$ is a tubular neighborhood of $\partial\Omega$. The value of $\epsilon$ will be precised later. We can parametrize $\mathcal{N}_\epsilon$  using normal coordinates $x\equiv (t,\pi(x)) \in\mathcal{N}_\epsilon$, where we write $x=\pi(x)+t\nu(\pi(x))$ for some $t\in [0,\epsilon)$, where $\pi:\mathcal{N}_\epsilon\rightarrow\partial\Omega$ is the orthogonal projection and $\nu$ is the unit   normal vector to $\partial\Omega$ pointing to $\Omega$. Among the properties of the function $d$, we know that $d$ is   $C^2$,  $|Dd|(x)=1$,  and $\Delta d(x) \leq -(n-1)H_{\partial\Omega}(\pi(x))\leq 0$  for all $x\in\mathcal{N}_\epsilon$, where    last inequality holds because $\Omega$ is mean convex.  

Define in $\mathcal{N}_\epsilon$ a function $w=h\circ d+\varphi$, where  we extended $\varphi$ to $\mathcal{N}_\epsilon$ by letting $\varphi(x)=\varphi(\pi(x))$. Here $h(t)=a\log(1+bt)$, $a,b>0$ to be chosen later. It is known that $h\in C^\infty[0,\infty)$ and $h''=-h'^2/a$. The computation of  $Q[w]$ leads to
$$Q[w]=a_{ij}(h''d_id_j+h'd_{ij}+\varphi_{ij})-\frac{\alpha}{w}(1+|Dw|^2).$$
From $|Dd|=1$, it follows that $\langle D(Dd)_x\xi,Dd(x)\rangle=0$ for all $\xi\in\r^n$. If  $\{e_i\}_i$ is the canonical basis of $\r^n$, by taking $\xi=e_i$, we find $d_{ij}d_j=0$. Thus
\begin{eqnarray*}
w_iw_jd_{ij}&=&(h'd_i+\varphi_i)(h'd_j+\varphi_j)d_{ij}=(h'^2d_i+2h'\varphi_i)d_jd_{ij}+\varphi_i \varphi_jd_{ij}\\
&=&\varphi_{i}\varphi_jd_{ij}\geq |D\varphi|^2\Delta d.
\end{eqnarray*}
Using this inequality and from the definition of $a_{ij}$ in (\ref{op}), we derive
$$a_{ij}d_{ij}=(1+|Dw|^2)\Delta d-w_iw_j d_{ij}\leq(1+|Dw|^2-|D\varphi|^2)\Delta d.$$
 Since $|\xi|^2\leq a_{ij}\xi_i\xi_j\leq(1+|Dw|^2)|\xi|^2$ for all $\xi\in\r^n$,   we have
 $a_{ij}d_id_j\geq 1$ and $a_{ij}\varphi_{ij}\leq (1+|Dw|^2)|D^2\varphi|$, where $|D^2\varphi|=\sum_{ij}\sup_{\overline{\Omega}}|\varphi_{ij}|$. By using that $h'>0$ and $\Delta d\leq 0$, we find
\begin{eqnarray*}
Q[w]&\leq& h''+h'\Delta d(1+|Dw|^2-|D\varphi|^2)+(-\frac{\alpha}{w}+|D^2 \varphi|)(1+|Dw|^2)\\
&\leq & h''+\left(-\frac{\alpha}{w}+|D^2 \varphi|\right)(1+|Dw|^2)\\
&=&h''+\left(-\frac{\alpha}{w}+|D^2 \varphi|\right)(1+h'^2+|D\varphi|^2+2h'|D\varphi|).
\end{eqnarray*}
In the tubular neighborhood $\mathcal{N}_\epsilon$, we have
\begin{equation}\label{ww}
w=a\log(1+bd)+\varphi\geq a\log(1+b)-\|\varphi\|_{0;\Omega}>0,
\end{equation}
where the last inequality holds  if $a\log(1+b)$ is sufficiently large. We will now assume that this is true. In particular, and because $\alpha<0$, we find 
$$-\frac{\alpha}{w}+|D^2 \varphi|\leq \frac{-\alpha}{a\log(1+b)-\|\varphi\|_{0;\Omega}}+\|D^2\varphi\|_{0;\Omega}:=\beta.$$
Therefore, and  taking into account that $h''=-h'^2/a$, we deduce
$$Q[w]\leq \left(\beta-\frac{1}{a}\right)h'^2+2\beta h'\|D\varphi\|_{0;\Omega}+\beta(1+\|D\varphi\|_{0;\Omega}^2).$$
We take $a= c/\log(1+b)$, $c>0$ to be chosen later. Then the above inequality for $Q[w]$ writes as 
\begin{equation}\label{qq}
Q[w]\leq \left(\beta-\frac{\log(1+b)}{c}\right)\frac{c^2b^2}{(1+bt)^2}+2\beta  \|D\varphi\|_{0;\Omega} \frac{cb}{1+bt}+\beta(1+\|D\varphi\|_{0;\Omega}^2),
\end{equation}
where we denote again $x\equiv(t,\pi(x))$ in normal coordinates. For $b$ sufficiently large, the parenthesis $\beta-\log(1+b)/c$ in (\ref{qq}) is negative. If we see  the right hand side in (\ref{qq}) as a continuous function  $\phi(t)$, $t>0$,   then we find that $\phi(0)<0$ for $b$ large enough. Since $\partial\Omega$ is compact, by an argument of continuity, there exists $\epsilon>0$ small enough to ensure that $\phi(t)<0$ for $t\in [0,\epsilon)$. This defines definitively the   tubular neighborhood $\mathcal{N}_\epsilon$ of $\partial\Omega$ and, furthermore, we conclude that   for $b$ large enough, we find $Q[w]<0$.     
   
 In order to assure that $w$ is a local upper barrier  in $\mathcal{N}_\epsilon$ for the Dirichlet problem (\ref{eq1})-(\ref{eq2}), and because we will  apply the comparison principle, we have to prove that 
\begin{equation}\label{mm}
u\leq w\quad \mbox{in $\partial\mathcal{N}_\epsilon$}.
\end{equation}
   In $\partial\mathcal{N}_\epsilon\cap\partial\Omega$,   the distance function  is $d=0$, so $w=\varphi=u$. On the other hand, in $\partial\mathcal{N}_\epsilon\setminus\partial\Omega$, we find $w=h(\epsilon)+\varphi=a\log(1+b\epsilon)+\varphi$. Denote $\mu=C_1+\|\varphi\|_{0;\Omega}$, where   $C_1$ is the constant of Proposition \ref{pr-31}.  Take $c>0$ sufficiently large so that
   $$c\geq\frac{\mu\log(1+b)}{\log(1+b\epsilon)}.$$
 With this choice of $c$, we infer that  $u\leq w$ in $\partial\mathcal{N}_\epsilon\setminus\partial\Omega$. By the way, and taking $c$ large enough if necessary, we assure that $w>0$ in (\ref{ww}). Definitively, (\ref{mm}) holds  in $\partial\mathcal{N}_\epsilon\setminus\partial\Omega$. Because $Q[w]<0=Q[u]$, we conclude     $u\leq w$ in $\mathcal{N}_\epsilon$ by the comparison principle. 
  
Consequently, we have proved the existence of lower and upper barriers for $u$  in $\mathcal{N}_\epsilon$, namely, $v^0\leq u\leq w$ in $\mathcal{N}_\epsilon$. Hence we deduce 
$$\max_{\partial\Omega}|Du|\leq C_2:=\max\{\|Dw\|_{0;\partial\Omega}, \|Dv^0\|_{0;\partial\Omega}\}$$
  and both values $\|Dw\|_{0;\partial\Omega}, \|Dv^0\|_{0;\partial\Omega}$ depend only on $\alpha$, $\Omega$, $C_1$ and $\varphi$.  This completes the proof of proposition. 
    \end{proof}

 %%%%%%%%%%%%%%
 \section{Proof of Theorem \ref{t1}}\label{sec5}
 %%%%%%%%%%%%%%%%%%%%%%%

We establish the solvability  of the Dirichlet problem   (\ref{eq1})-(\ref{eq2}) by applying  a slightly modified method of continuity,   where the boundary data is fixed on the deformation (see   \cite[Sec. 17.2]{gt}). Define  the family of Dirichlet  problems parametrized by $t\in [0,1]$ by  
 $$\mathcal{P}_t: \left\{\begin{array}{cll}
Q_t[u]&=&0 \mbox{ in $\Omega$}\\
 u&=& \varphi \mbox{ on $\partial\Omega,$}
 \end{array}\right.$$
 where 
   $$Q_t[u]= (1+|Du|^2)\Delta u-u_iu_ju_{ij}-\frac{\alpha t(1+|Du|^2)}{u}.$$
The graph $\Sigma_{u_t}$ of a solution of $u_t$   is a $(t\alpha)$-singular minimal surface.    As usual, let 
$$\mathcal{A}=\{t\in [0,1]: \exists u_t\in C^{2,\gamma}(\overline{\Omega}), u_t>0, Q_t[u_t]=0, {u_t}_{|\partial\Omega}=\varphi\}.$$ 
The proof consists to show that $1\in \mathcal{A}$. For this, we prove that $\mathcal{A}$ is a non-empty open and closed subset of $[0,1]$.

\begin{enumerate}
\item  The set  $\mathcal{A}$ is not empty. Let us observe that $0\in\mathcal{A}$: if $t=0$, then $u_0$ is   the solution $v^0$ provided by the Jenkins-Serrin theorem (\cite{js}).  Notice that $v^0>0$ by the maximum principle.

\item The set $\mathcal{A}$ is open in $[0,1]$. Given $t_0\in\mathcal{A}$ we need to prove that there exists $\epsilon>0$ such that $(t_0-\epsilon,t_0+\epsilon)\cap [0,1]\subset\mathcal{A}$. Define the map $T(t,u)=Q_t[u]$ for $t\in\r$ and $u\in  C^{2,\gamma}(\overline{\Omega})$. Then $t_0\in\mathcal{A}$ if and only if $T(t_0,u_{t_0})=0$. If we prove that the derivative  of $Q_t$ with respect to $u$, say $(DQ_t)_u$, at the point $u_{t_0}$ is an isomorphism, it follows from the Implicit Function Theorem the existence of an open set $\mathcal{V}\subset C^{2,\gamma}(\overline{\Omega})$, with $u_{t_0}\in \mathcal{V}$ and a $C^1$ function $\xi:(t_0-\epsilon,t_0+\epsilon)\rightarrow \mathcal{V}$ for some $\epsilon>0$, such that $\xi(t_0)=u_{t_0}>0$ and  $T(t,\xi(t))=0$ for all $t\in (t_0-\epsilon,t_0+\epsilon)$: this guarantees that $\mathcal{A}$ is an open  set of  $[0,1]$.

The proof that $(DQ_t)_u$ is one-to-one is equivalent that say that for any $f\in C^\gamma(\overline{\Omega})$, there exists a unique solution $v\in C^{2,\gamma}(\overline{\Omega})$ of the linear equation $Lv:=(DQ_t)_u(v)=f$ in $\Omega$ and $v=\varphi$ on $\partial\Omega$. The computation of $L$ is
$$Lv=(DQ_t)_uv=a_{ij}(Du)v_{ij}+\mathcal{B}_i(u,Du,D^2u)v_i+{\mathbf c}(u,Du)v,$$
where $a_{ij}$ is as in (\ref{op}) and 
$$\mathcal{B}_i=2( \Delta u-\frac{\alpha t}{u})u_i-2u_ju_{ij},\quad {\mathbf c}=\frac{\alpha t(1+|Du|^2)}{u^2}.$$
Since $\alpha<0$, the function $\textbf{c}$ satisfies ${\mathbf c}\leq 0$ and the existence and uniqueness is assured by standard theory (\cite[Th. 6.14]{gt}).

\item The set $\mathcal{A}$ is closed in $[0,1]$. Let $\{t_k\}\subset\mathcal{A}$ with $t_k\rightarrow t\in [0,1]$. For each $k\in\mathbb{N}$, there exists $u_k\in C^{2,\gamma}(\overline{\Omega})$, $u_k>0$,  such that $Q_{t_k}[u_k]=0$ in $\Omega$ and $u_k=\varphi$ in $\partial\Omega$. Define the set
$$\mathcal{S}=\{u\in C^{2,\gamma}(\overline{\Omega}): \exists t\in [0,1]\mbox{ such that }Q_{t}[u]=0 \mbox{ in }\Omega, u_{|\partial\Omega}=\varphi\}.$$
Then $\{u_k\}\subset\mathcal{S}$. If we prove that the set $\mathcal{S}$ is bounded in $C^{1,\beta}(\overline{\Omega})$ for some $\beta\in[0,\gamma]$, and since $a_{ij}=a_{ij}(Du)$ in (\ref{op}), then Schauder theory proves that $\mathcal{S}$ is bounded in $C^{2,\beta}(\overline{\Omega})$, in particular, $\mathcal{S}$ is precompact in $C^2(\overline{\Omega})$  (see Th. 6.6 and Lem. 6.36 in \cite{gt}). Thus there exists a subsequence $\{u_{k_l}\}\subset\{u_k\}$ converging to some $u\in C^2(\overline{\Omega})$ in $C^2(\overline{\Omega})$. Since $T:[0,1]\times C^2(\overline{\Omega})\rightarrow C^0(\overline{\Omega})$ is continuous, it follows $Q_t[u]=T(t,u)=\lim_{l\rightarrow\infty}T(t_{k_l},u_{k_l})=0$ in $\Omega$. Moreover, $u_{|\partial\Omega}=\lim_{l\rightarrow\infty} {u_{k_l}}_{|\partial\Omega}=\varphi$ on $\partial\Omega$, so $u\in C^{2,\gamma}(\overline{\Omega})$ and consequently, $t\in \mathcal{A}$.

The above reasoning says that   $\mathcal{A}$ is  closed in $[0,1]$ provided we find a constant $M$ independent of $t\in\mathcal{A}$,  such that  
$$
\|u_t\|_{C^1(\overline{\Omega})}=\sup_\Omega |u_t|+\sup_\Omega|Du_t|\leq M.
$$
However the $C^0$ and $C^1$ estimates for $u_1=u_t$, that is, when the parameter   is $t=1$,  proved in Sections \ref{sec3} and \ref{sec4} are enough as we     see now.  

The $C^0$ estimates for $u_t$ follow with the comparison principle. Indeed, let $t_1<t_2$, $t_i\in [0,1]$, $i=1,2$. Then $Q_{t_1}[u_{t_1}]=0$ and 
$$Q_{t_1}[u_{t_2}]=-\frac{(t_1-t_2)\alpha(1+|Du_{t_2}|^2)}{u_{t_2}}<0$$
because $\alpha<0$. Since $u_{t_1}=u_{t_2}$ on $\partial\Omega$, the comparison principle yields $u_{t_1}<u_{t_2}$ in $\Omega$. This proves that the solutions $u_{t_i}$ are ordered in increasing sense according the parameter $t$. Consequently, and by (\ref{eh}), we find
\begin{equation}\label{ut}
\sup_\Omega u_t \leq \sup_\Omega u_1 \leq C_1.
\end{equation}

 In order to find  the gradient estimates for the solution $u_t$,  the same computations  given in   Proposition \ref{pr42}     conclude that $\sup_{\partial\Omega}|Du_t|$ is bounded by a constant depending on $\alpha$, $\Omega$, $\varphi$ and  $\|u_t\|_{0;\Omega}$. However, and by using (\ref{ut}),  the value  $\|u_t\|_{0;\Omega}$ is bounded by $C_1$, which depends only on $\alpha$, $\varphi$ and $\Omega$.   

\end{enumerate}
The above three steps proves the part of existence in Theorem \ref{t1}. The uniqueness is consequence  of Proposition \ref{pr-u} and this completes the proof of theorem.

\begin{remark} We point out that a $C^0$-version of Theorem \ref{t1} holds for continuous positive boundary values $\varphi$. For this, let $\varphi\in C^0(\partial\Omega)$ be given. Let $\{\varphi_k^{+}\}, \{\varphi_k^{-}\}\in C^{2,\gamma}(\partial\Omega)$ be a monotonic sequence of functions converging from above and from below to $\varphi$ in the $C^0$ norm. It follows from Theorem \ref{t1}  that there exist solutions $u_k^{+}, u_k^{-}\in C^{2,\gamma}(\overline{\Omega})$ of the $\alpha$-singular minimal surface equation (\ref{eq1}) such that ${u_k^{+}}_{|\partial\Omega}=\varphi_k^{+}$ and ${u_k^{-}}_{|\partial\Omega}=\varphi_k^{-}$. The sequences $\{u_k^{\pm}\}$ are uniformly bounded in the $C^0$ norm since, by the comparison principle, we find
$$u_1^{-}\leq\ldots \leq u_k^{-}\leq u_{k+1}^{-}\leq\ldots \leq u_{k+1}^{+}\leq u_k^+\leq\ldots\leq u_1^{+}$$
for every $k$. By the proof of Theorem \ref{t1}, the sequences $\{u_k^{\pm}\}$ have a priori $C^1$ estimates depending only on $\alpha$, $\Omega$, $\varphi$ and  the $C^0$ estimates.    Using classical Schauder theory again (\cite[Th. 6.6]{gt}),  the sequence $\{u_k^{\pm}\}$ contains a subsequence  $\{v_k\}\in C^{2,\gamma}(\overline{\Omega})$ converging uniformly on the $C^2$ norm on compacts subsets of $\Omega$ to a solution $u\in C^2(\Omega)$ of (\ref{eq1}). Since $\{{u_k^{\pm}}_{|\partial\Omega}\}=\{\varphi_k^{\pm}\}$    and $\{\varphi_k^{\pm}\}$  converge to $\varphi$, it follows  that $u$ extends continuously to $\overline{\Omega}$ and $u_{|\partial\Omega}=\varphi$.

\end{remark}

 %%%%%%%%%%%%

 %%%%%%%%%%%%%%%%%%%%%%%%%%%%%%%%%%%%%%

 \end{document}